\theoremstyle{plain}
\newtheorem{definition}{Definition}
\newtheorem{lemma}{Lemma}
\newtheorem{theorem}{Theorem}
\numberwithin{equation}{section}
\begin{document}
\title{Integral Transformations Between Some Function Spaces On Time Scales}
\author{M.Seyyit SEYY\.{I}DOGLU}
\address{U\c{s}ak \"{U}niversitesi Rekt\"{o}rl\"{u}\u{g}\"{u} 1 Eyl\"{u}l
Kamp\"{u}s\"{u} 64200 - U\c{S}AK}
\email{seyyit.seyyidoglu@usak.edu.tr\\
nozkan.tan@usak.edu.tr}
\author{N.\"{O}zkan TAN}
\date{April 15, 2011}
\subjclass{39A10}
\keywords{Function spaces on time scale, Integral transformations on time
scales}

\begin{abstract}
In this paper we defined some function spaces on time scale which are Banach
spaces respect to supremum norm. We study integral transformations which are
carry to some important properties between mentioned above function spaces.
\end{abstract}

\maketitle

\section{\protect\bigskip Introduction}

The calculus on time scales has been introduced by Aulbach and Hilger \cite%
{Aul,Hil} in order to unify discrete and continuous analysis. In \cite%
{Aul,Hil,Boh} the concept of integral on time scales is defined by means of
an antiderivative(or pre-antiderivative)of function is called Cauchy
integral. In \cite{Sai} the Darboux and in \cite{Gus1,Gus2,Gus3}the Riemann
definitions of the integral on time scales are introduced and main theorems
of the integral calculus are established. In \cite{Bo} the improper Riemann-$%
\Delta $ Integral is defined which are important in the study of dynamic
systems on infinite intervals and properties improper Riemann-$\Delta $
integral are established.

Firstly we can give some basic definitions and theorems about the theory of
time sclales and Riemann-$\Delta $ integration. A time scale $\mathbb{T}$ is
an arbitrary nonempty closed subset of the real numbers $\mathbb{R}$. The
time scale $\mathbb{T}$ is a complete metric space with the usual metric. We
assume throughout that a time scale $\mathbb{T}$ has the topology that it
inherits from the real numbers with the standart topology.

For $t\in \mathbb{T}$ we define the \textit{forward jump operator} $\sigma :%
\mathbb{T\rightarrow T}$ by 
\begin{equation*}
\sigma \left( t\right) :=\inf \left\{ s\in \mathbb{T}\text{:~}s>t\right\}
\end{equation*}%
while the \textit{backward jump operator} $\rho :\mathbb{T\rightarrow T}$ is
defined by%
\begin{equation*}
\rho \left( t\right) :=\sup \left\{ s\in \mathbb{T}\text{:~}s<t\right\} .
\end{equation*}

If $\sigma \left( t\right) >t$, we say that $t$ is \textit{right-scattered},
while if $\rho \left( t\right) <t~$we say that $t$ is \textit{left-scattered}%
. Points that are right-scattered and left-scattered at the same time are
called \textit{isolated.} Also, if $\sigma \left( t\right) =t$, then $t$ is
called \textit{right-dense, and if }$\rho \left( t\right) =t,$ then $t$ is
called \textit{left-dense}. Points that are right-dense and left-dense are
called \textit{dense. }The \textit{graininess function }$\mu :\mathbb{%
T\rightarrow }\left[ 0,\infty \right) $ is defined by 
\begin{equation*}
\mu \left( t\right) :=\sigma \left( t\right) -t.
\end{equation*}

For $a,b\in \mathbb{T}$ with $a\leq b$ we define the interval $\left[ a,b%
\right] $ in $\mathbb{T}$ by 
\begin{equation*}
\left[ a,b\right] =\left\{ t\in \mathbb{T}\text{: }a\leq t\leq b\right\} .
\end{equation*}

Open intervals and half-open intervals etc. are defined accordingly. (see 
\cite{Boh})

Let $a<b$ be points in $\mathbb{T}$ and $\left[ a,b\right] $ the closed
interval in $\mathbb{T}$. A \textit{partition} of $\left[ a,b\right] $ is
any finite ordered subset%
\begin{equation*}
P=\left\{ t_{0},t_{1},...,t_{n}\right\} \subset \left[ a,b\right] \text{ \ \
\ \ \ \ where \ \ \ \ \ }a=t_{0}<t_{1}<...<t_{n}=b.
\end{equation*}%
We denote the set of all partitions of $\left[ a,b\right] $ by $\mathcal{P}=%
\mathcal{P}(a,b).$

\begin{lemma}
\label{Pdel}\cite{Gus3}For every $\delta >0$ there exists a partition $%
P=\left\{ t_{0},t_{1},...,t_{n}\right\} \in \mathcal{P}(a,b)$ such that for
each $i\in \left\{ 1,2,...,n\right\} $ either $t_{i}-t_{i-1}\leq \delta $ or 
$t_{i}-t_{i-1}>\delta $ and $\rho (t_{i})=t_{i-1}.$
\end{lemma}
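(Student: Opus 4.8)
The plan is to construct the partition greedily from left to right, at each stage taking the largest admissible next node, and then to invoke the compactness of $[a,b]$ (equivalently the closedness of $\mathbb{T}$) to see that the construction halts after finitely many steps.

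First I would set $t_0=a$. Given a node $t_{i-1}<b$, let $m_i=\min\{t_{i-1}+\delta,\,b\}$ and examine the set $(t_{i-1},m_i]\cap\mathbb{T}$. If this set is nonempty, I define $t_i=\sup((t_{i-1},m_i]\cap\mathbb{T})$; since $\mathbb{T}$ is closed and the set is bounded above by $m_i$, this supremum is attained, so $t_i\in\mathbb{T}$ with $t_{i-1}<t_i\le m_i$, whence $t_i-t_{i-1}\le\delta$. If instead $(t_{i-1},m_i]\cap\mathbb{T}=\emptyset$, then (as $b\in\mathbb{T}$ and $b>t_{i-1}$) necessarily $m_i=t_{i-1}+\delta<b$ and there is no point of $\mathbb{T}$ in $(t_{i-1},t_{i-1}+\delta]$; in that case I set $t_i=\sigma(t_{i-1})$. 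Here $t_i-t_{i-1}=\mu(t_{i-1})>\delta$, and because by definition of $\sigma$ no point of $\mathbb{T}$ lies strictly between $t_{i-1}$ and $\sigma(t_{i-1})$, one gets $\rho(t_i)=t_{i-1}$. Either way $t_{i-1}<t_i\le b$ and the pair $(t_{i-1},t_i)$ satisfies exactly the required alternative; if $t_i=b$ the construction stops.

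The hard part will be showing that this process reaches $b$ in finitely many steps. Suppose not; then it produces a strictly increasing sequence $t_0<t_1<\cdots$ with every $t_i<b$, which converges to some $L=\sup_i t_i\le b$, and $L\in\mathbb{T}$ since $\mathbb{T}$ is closed. Choosing $N$ with $t_N>L-\delta$, I would argue that for every $i>N$ the step cannot be of the jump type: a jump would force $t_i-t_{i-1}>\delta$, which is impossible since $t_{i-1}>L-\delta$ and $t_i<L$. Hence each such step is of the first type, with $m_i=\min\{t_{i-1}+\delta,\,b\}\ge L$ (because $t_{i-1}+\delta>L$ and $b\ge L$); but then $L\in(t_{i-1},m_i]\cap\mathbb{T}$, so the greedy choice gives $t_i=\sup((t_{i-1},m_i]\cap\mathbb{T})\ge L$, contradicting $t_i<L$. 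This contradiction shows the construction terminates, and the nodes it produces form the desired partition $P$.

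The only step requiring genuine care is this termination argument; the per-step verification of the two alternatives is routine once $\sigma$, $\rho$, and the closedness of $\mathbb{T}$ are used. An equivalent and perhaps tidier packaging would be to let $c$ be the supremum of all $t\in[a,b]$ for which $[a,t]$ admits such a partition and to show $c=b$ by the same overshoot argument; I would nonetheless prefer the explicit recursion above, since it exhibits the partition directly.
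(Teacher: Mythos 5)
Your construction is correct: both the per-step case analysis (using closedness of $\mathbb{T}$ to see that the supremum in the first case is attained and that $\sigma(t_{i-1})>t_{i-1}+\delta$ in the second) and the termination argument via the limit $L=\sup_i t_i\in\mathbb{T}$ are sound and complete. Note that the paper itself offers no proof of this lemma --- it is quoted from Guseinov and Kaymak\c{c}alan \cite{Gus3} --- and the argument given there is essentially the ``tidier packaging'' you mention at the end (taking the supremum of the set of $t$ for which $[a,t]$ admits such a partition), so your explicit greedy recursion is a legitimate, if slightly more hands-on, variant of the standard proof.
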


\begin{definition}
\cite{Gus3}We denote by $\mathcal{P}_{\delta }=\mathcal{P}_{\delta }(a,b)$
the set of all $P\in \mathcal{P}(a,b)$ that possess the property indicated
in Lemma \ref{Pdel}.
\end{definition}

\begin{definition}
\cite{Gus3}Let $f$ be a function on $\left[ a,b\right] $ and let $P=\left\{
t_{0},t_{1},...,t_{n}\right\} \in \mathcal{P}(a,b).$ In each interval $%
[t_{i-1},t_{i})$, where $1\leq i\leq n$, choose an arbitrary point $\xi _{i}$
and form the sum%
\begin{equation*}
S=\sum\limits_{i=1}^{n}f(\xi _{i})(t_{i}-t_{i-1}).
\end{equation*}

We call $S$ a Riemann sum of $f$ corresponding to $P\in \mathcal{P}.$ We say
that $f$ is Riemann integrable on $\left[ a,b\right] $ provided there exists
a number $I$ with the following property: For each $\varepsilon >0$ there
exists $\delta >0$ such that $\left\vert S-I\right\vert <\varepsilon $ for
every Riemann sum $S$ of $f$ corresponding to a partition $P\in \mathcal{P}%
_{\delta }$ independent of the way in which we choose $\xi _{i}\in \lbrack
t_{i-1},t_{i}),1\leq i\leq n.$ The number $I$ is called the Riemann $\Delta $%
-integral of $f$ on $\left[ a,b\right] $ and we write$\tint%
\nolimits_{a}^{b}f(t)\Delta t=I.$
\end{definition}

\begin{theorem}
\cite{Boh}Let $f:\mathbb{T}\rightarrow \mathbb{R}$ and let $t\in \mathbb{T}.$%
Then $f$ is Riemann $\Delta $-integrable from $t$ to $\sigma (t)$ and%
\begin{equation*}
\int\nolimits_{t}^{\sigma (t)}f(s)\Delta s=\mu (t)f(t).
\end{equation*}
\end{theorem}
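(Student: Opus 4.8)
The plan is to prove that any $f\colon\mathbb{T}\to\mathbb{R}$ is Riemann $\Delta$-integrable on the interval $[t,\sigma(t)]$ and that the integral equals $\mu(t)f(t)$. The argument splits naturally into two cases according to whether $t$ is right-dense or right-scattered.

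If $t$ is right-dense, then $\sigma(t)=t$, so $\mu(t)=0$ and the interval $[t,\sigma(t)]=\{t\}$ is degenerate. Here the only partition of $[t,t]$ is the trivial one, every Riemann sum is $0$, and both sides of the asserted identity vanish, so the claim holds immediately.

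The substantive case is when $t$ is right-scattered, so that $\sigma(t)>t$ and $\mu(t)=\sigma(t)-t>0$. The key observation is that the interval $[t,\sigma(t)]$ in $\mathbb{T}$ contains no points strictly between $t$ and $\sigma(t)$, by the very definition of $\sigma$ as the infimum of time-scale points exceeding $t$. Hence the only admissible partition is $P=\{t_0,t_1\}=\{t,\sigma(t)\}$ with $n=1$, and the sole subinterval is $[t_0,t_1)=[t,\sigma(t))$. Any tag $\xi_1$ must lie in $[t,\sigma(t))$, but that half-open interval contains only the point $t$, so necessarily $\xi_1=t$.

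Therefore every Riemann sum is forced to equal $S=f(\xi_1)(t_1-t_0)=f(t)(\sigma(t)-t)=\mu(t)f(t)$, with no freedom in the choice of partition or tags. Setting $I=\mu(t)f(t)$, for any $\varepsilon>0$ one may take any $\delta>0$ and conclude $\lvert S-I\rvert=0<\varepsilon$ for every Riemann sum corresponding to a partition in $\mathcal{P}_{\delta}$, which verifies integrability directly from the definition. I do not expect a genuine obstacle here; the only point requiring care is the assertion that $[t,\sigma(t))\cap\mathbb{T}=\{t\}$, which follows from the closedness of $\mathbb{T}$ and the defining property of the forward jump operator, and the accompanying observation that this single-point half-open interval forces the tag and leaves the Riemann sum uniquely determined.
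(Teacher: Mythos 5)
Your argument is correct. Note that the paper itself gives no proof of this statement; it is quoted from the literature (Bohner--Peterson), and your two-case argument --- trivial when $t$ is right-dense, and, when $t$ is right-scattered, observing that $[t,\sigma(t)]\cap\mathbb{T}=\{t,\sigma(t)\}$ forces the unique partition $\{t,\sigma(t)\}$ and the unique tag $\xi_1=t$, so every Riemann sum equals $\mu(t)f(t)$ --- is exactly the standard proof. The one point you could make explicit is that this forced partition lies in $\mathcal{P}_{\delta}$ for every $\delta>0$ (either $\mu(t)\leq\delta$, or $\mu(t)>\delta$ and $\rho(\sigma(t))=t$ since no points of $\mathbb{T}$ lie between $t$ and $\sigma(t)$), so the definition of integrability is genuinely met.
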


\begin{theorem}
\cite{Boh}Let $a,b\in \mathbb{T}$. Then we have the following:

i) If $\mathbb{T=R}$, then a function $f$ on $[a,b]$ is Riemann $\Delta $%
-integrable from $a$ to $b$ if and only if $f$ is Riemann integrable on $%
[a,b]$ in the classical sense, and in this case%
\begin{equation*}
\int\nolimits_{a}^{b}f(t)\Delta t=\int\nolimits_{a}^{b}f(t)dt,
\end{equation*}%
where the integral on the right is the ordinary Riemann integral.

ii) If $\mathbb{T=Z}$, then every function $f$ defined on $\mathbb{Z}$ is
the Riemann $\Delta $-integrable from $a$ to $b$ and%
\begin{equation*}
\int\nolimits_{a}^{b}f(t)\Delta t=\left\{ 
\begin{array}{cc}
\sum\nolimits_{t=a}^{b-1}f(t) & a<b \\ 
0 & a=b \\ 
\sum\nolimits_{t=b}^{a-1}f(t) & a>b.%
\end{array}%
\right.
\end{equation*}
\end{theorem}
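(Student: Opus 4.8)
The plan is to handle the two parts separately, and in each case to exploit the rigid description of $\mathcal{P}_\delta(a,b)$ that Lemma \ref{Pdel} forces when $\mathbb{T}$ is all of $\mathbb{R}$ or all of $\mathbb{Z}$. In both situations the strategy is the same: first read off exactly which partitions belong to $\mathcal{P}_\delta$, then substitute this description into the definition of the Riemann $\Delta$-integral and compare with the target formula.

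For part (i), the first observation is that in $\mathbb{R}$ every point is dense, so $\rho(t)=t$ for all $t$ and the graininess vanishes identically. Consequently the second alternative in Lemma \ref{Pdel}, namely $t_i-t_{i-1}>\delta$ together with $\rho(t_i)=t_{i-1}$, can never occur, since $\rho(t_i)=t_i\neq t_{i-1}$. Hence $\mathcal{P}_\delta(a,b)$ reduces to the set of all partitions of mesh at most $\delta$, i.e.\ those with $t_i-t_{i-1}\le\delta$ for every $i$. Substituting this into the definition of the Riemann $\Delta$-integral turns that definition verbatim into the classical (mesh) formulation of the Riemann integral: $f$ is $\Delta$-integrable with value $I$ iff for every $\varepsilon>0$ there is $\delta>0$ so that every tagged Riemann sum over a mesh-$\delta$ partition lies within $\varepsilon$ of $I$. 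This yields simultaneously the equivalence of the two notions of integrability and the equality $\int_a^b f(t)\,\Delta t=\int_a^b f(t)\,dt$.

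The one point that needs care, and the main obstacle in part (i), is that the $\Delta$-definition restricts the tags to the half-open subintervals $\xi_i\in[t_{i-1},t_i)$, whereas the classical definition allows $\xi_i\in[t_{i-1},t_i]$. I would dispose of this as follows: as the partition ranges over $\mathcal{P}_\delta$, every point of $[a,b)$ occurs as the left endpoint of some subinterval and is therefore an admissible tag, so the half-open tags already exhaust all of $[a,b)$; the only excluded point is the right endpoint $b$. Since ignoring the value of $f$ at a single point changes neither Riemann integrability nor the value of the integral, the half-open and closed formulations are equivalent. Equivalently, one compares the upper and lower Darboux-type sums built from $\sup$ and $\inf$ over $[t_{i-1},t_i)$ with those over $[t_{i-1},t_i]$ and notes that $f(t_i)$, absent from the $i$-th half-open supremum, is recaptured as the left-endpoint value in the $(i+1)$-th subinterval.

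For part (ii), I would again read off $\mathcal{P}_\delta$ from Lemma \ref{Pdel}, now with $\mathbb{T}=\mathbb{Z}$, where every point is isolated: $\sigma(t)=t+1$, $\rho(t)=t-1$, and consecutive points differ by exactly $1$. Fix $a<b$ and take any $\delta<1$. Since no two distinct integers lie within $\delta$, the first alternative $t_i-t_{i-1}\le\delta$ is impossible, so every subinterval must satisfy $\rho(t_i)=t_{i-1}$, i.e.\ $t_{i-1}=t_i-1$; this forces the unique partition $P=\{a,a+1,\dots,b\}$. In each half-open interval $[t_{i-1},t_i)\cap\mathbb{Z}=\{t_{i-1}\}$ the only admissible tag is $\xi_i=t_{i-1}$, so there is exactly one Riemann sum, $S=\sum_{i=1}^{b-a}f(a+i-1)=\sum_{t=a}^{b-1}f(t)$. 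With a single admissible sum the convergence requirement of the definition is automatic ($|S-I|=0$), so every $f$ is $\Delta$-integrable with $I=\sum_{t=a}^{b-1}f(t)$. The case $a=b$ gives the empty sum $0$, and the case $a>b$ reduces to the case $a<b$ via the antisymmetry convention $\int_a^b f\,\Delta t=-\int_b^a f\,\Delta t$, which carries the computation to the sum $\sum_{t=b}^{a-1}f(t)$ over the consecutive integers from $b$ to $a-1$. Here there is essentially no obstacle: the entire content is the rigidity of $\mathcal{P}_\delta$ for $\delta<1$.
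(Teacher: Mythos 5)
This is one of the background results the paper quotes from \cite{Boh} without proof, so there is no proof of record in the paper to compare against; your argument has to stand on its own. In outline it does, and it is the standard derivation from the paper's own definition: identifying $\mathcal{P}_\delta(a,b)$ as the mesh-$\delta$ partitions when $\mathbb{T}=\mathbb{R}$ (since $\rho(t_i)=t_i\neq t_{i-1}$ kills the second alternative of Lemma \ref{Pdel}) and as the single partition $\{a,a+1,\dots,b\}$ when $\mathbb{T}=\mathbb{Z}$ and $\delta<1$ is exactly right, and the reduction of part (ii) to a one-element tag set is complete as stated for $a<b$ and $a=b$.

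Two points need repair. First, in part (i) the step from ``the restricted-tag sums converge'' to ``$f$ is classically Riemann integrable'' is not covered by the remark that changing $f$ at one point is harmless: you must first establish that the restricted condition forces $f$ to be bounded (it does, since an $f$ unbounded on $[a,b)$ admits tags in $[t_{i-1},t_i)$ making the restricted sums blow up, and $f(b)$ is a single finite value), and only then does your Darboux comparison go through; note also that for the last subinterval the value $f(b)$ is not recaptured by any later left endpoint, so there you must instead bound the discrepancy by $(f(b)-\inf f)\,\delta\to 0$. Second, and more concretely, in part (ii) your own reduction via the convention $\int_a^b f\,\Delta t=-\int_b^a f\,\Delta t$ produces $-\sum_{t=b}^{a-1}f(t)$ for $a>b$, not $\sum_{t=b}^{a-1}f(t)$; the formula as displayed in the statement (and hence your conclusion) is missing this minus sign, which is present in the source \cite{Boh}. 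You should either carry the sign through or flag the discrepancy rather than silently matching the unsigned formula.
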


\begin{theorem}
\cite{Gus3}Let $f$ and $g$ integrable functions on $\left[ a,b\right] $ and
let $\alpha \in \mathbb{R}.$ Then

i) $\alpha f$ is integrable and $\tint\nolimits_{a}^{b}(\alpha f)(t)\Delta
t=\alpha \tint\nolimits_{a}^{b}f(t)\Delta t,$

ii) $f+g$ is integrable and $\tint\nolimits_{a}^{b}(f+g)(t)\Delta
t=\tint\nolimits_{a}^{b}f(t)\Delta t+\tint\nolimits_{a}^{b}g(t)\Delta t,$

iii) $fg$ is integrable.
\end{theorem}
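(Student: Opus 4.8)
The plan is to prove the three linearity and product statements in turn, using the $\varepsilon$--$\delta$ characterization of Riemann $\Delta$-integrability directly, exactly as one does in the classical theory. For part (i), I would let $I=\int_a^b f(t)\Delta t$ and show that $\alpha I$ serves as the integral of $\alpha f$. Given $\varepsilon>0$, if $\alpha=0$ the claim is trivial since every Riemann sum of $0\cdot f$ vanishes; if $\alpha\neq 0$, I would apply the integrability of $f$ to $\varepsilon/|\alpha|$ to obtain a $\delta>0$ such that every Riemann sum $S_f$ of $f$ over a partition $P\in\mathcal{P}_\delta$ satisfies $|S_f-I|<\varepsilon/|\alpha|$. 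The key observation is that a Riemann sum of $\alpha f$ on $P$ with tags $\xi_i$ equals $\alpha$ times the corresponding Riemann sum of $f$ on $P$ with the same tags, so $|S_{\alpha f}-\alpha I|=|\alpha|\,|S_f-I|<\varepsilon$, which is precisely what integrability of $\alpha f$ requires.

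For part (ii), let $I=\int_a^b f\,\Delta t$ and $J=\int_a^b g\,\Delta t$; I would show $I+J$ is the integral of $f+g$. Given $\varepsilon>0$, invoke integrability of $f$ and of $g$ with tolerance $\varepsilon/2$ to get $\delta_1,\delta_2>0$, and set $\delta=\min\{\delta_1,\delta_2\}$. The essential point is that any partition $P\in\mathcal{P}_\delta$ lies in both $\mathcal{P}_{\delta_1}$ and $\mathcal{P}_{\delta_2}$ (since the defining property of $\mathcal{P}_\delta$ in Lemma \ref{Pdel} is monotone: a partition good for a smaller $\delta$ is good for any larger one). On such a $P$, with common tags $\xi_i$, the Riemann sum of $f+g$ splits additively as $S_{f+g}=S_f+S_g$, so the triangle inequality gives $|S_{f+g}-(I+J)|\le|S_f-I|+|S_g-J|<\varepsilon$.

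Part (iii) is the genuine obstacle, and I would treat it by the classical oscillation-sum argument adapted to $\mathcal{P}_\delta$. The natural route is to characterize integrability through Darboux-type upper and lower sums (or, equivalently, through the Cauchy criterion: $f$ is integrable iff for every $\varepsilon>0$ there is $\delta>0$ with $|S-S'|<\varepsilon$ for all Riemann sums $S,S'$ over partitions in $\mathcal{P}_\delta$). Since integrable functions are bounded, say $|f|,|g|\le M$ on $[a,b]$, I would control the oscillation of $fg$ on each subinterval by the pointwise estimate $|f(x)g(x)-f(y)g(y)|\le M\big(|f(x)-f(y)|+|g(x)-g(y)|\big)$, so the oscillation of $fg$ on $[t_{i-1},t_i)$ is bounded by $M$ times the sum of the oscillations of $f$ and of $g$. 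Multiplying by $(t_i-t_{i-1})$ and summing, the total oscillation sum of $fg$ is dominated by $M$ times the sum of the oscillation sums of $f$ and $g$, each of which can be made small by choosing an appropriate $\delta$ via the Cauchy criterion for $f$ and $g$ separately.

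The hard part will be verifying that these Darboux/oscillation arguments go through for the time-scale partitions $\mathcal{P}_\delta$ rather than for arbitrary refinements, since the usual proof relies freely on refining partitions and the structure of $\mathcal{P}_\delta$ must be respected (one cannot simply insert arbitrary points and stay in $\mathcal{P}_\delta$). I would lean on Lemma \ref{Pdel}, which guarantees the existence of partitions in $\mathcal{P}_\delta$ for every $\delta>0$, together with the monotonicity noted above, to ensure a common partition serves all three functions simultaneously; the boundedness of $f$ and $g$ (a consequence of their integrability) is what makes the product estimate uniform. Establishing that the Cauchy criterion is equivalent to the stated $\varepsilon$--$\delta$ definition over $\mathcal{P}_\delta$ is the preliminary step that underpins the whole argument for (iii).
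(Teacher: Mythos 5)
The paper does not prove this theorem at all: it is quoted as background from the reference [Gus3] (Guseinov and Kaymak\c{c}alan), so there is no in-paper argument to compare against. Judged on its own, your plan is essentially the standard proof from that reference. Parts (i) and (ii) are complete and correct; the monotonicity you invoke, namely $\mathcal{P}_{\delta}(a,b)\subseteq\mathcal{P}_{\delta'}(a,b)$ for $\delta\leq\delta'$, does hold (each subinterval either has length at most $\delta\leq\delta'$, or exceeds $\delta$ with $\rho(t_i)=t_{i-1}$, and in the latter case it either drops into the first alternative for $\delta'$ or stays in the second), so taking $\delta=\min\{\delta_1,\delta_2\}$ works. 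For part (iii) your oscillation strategy is the right one, but as written it rests on three pieces of infrastructure you only name rather than establish: that Riemann $\Delta$-integrability forces boundedness (true, by the usual tag-variation argument, since every subinterval $[t_{i-1},t_i)$ has positive length); that the Cauchy/Riemann criterion over $\mathcal{P}_{\delta}$ is equivalent to the stated definition; and that the Darboux machinery survives restriction to $\mathcal{P}_{\delta}$. On the last point your worry about refinements actually dissolves: if $P\in\mathcal{P}_{\delta}$ and a subinterval has $t_i-t_{i-1}>\delta$ with $\rho(t_i)=t_{i-1}$, then $(t_{i-1},t_i)$ contains no points of the time scale, so there is nothing to insert there, and refining the short subintervals keeps all lengths at most $\delta$; hence $\mathcal{P}_{\delta}$ is closed under refinement and the classical argument goes through verbatim. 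So the proposal is a sound outline rather than a finished proof, with no wrong step but with the Darboux-theoretic groundwork for (iii) left undone.
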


\begin{theorem}
\cite{Gus3}Let $f$ be a function defined on $\left[ a,b\right] $ and let $%
c\in \mathbb{T}$ with $a<c<b.$ If $f$ is integrable from $a$ to $c$ and $c$
to $b$, then $f$ is integrable from $a$ to $b$ and $\tint%
\nolimits_{a}^{b}f(t)\Delta t=\tint\nolimits_{a}^{c}f(t)\Delta
t+\tint\nolimits_{c}^{b}f(t)\Delta t.$
\end{theorem}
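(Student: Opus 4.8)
The plan is to set $I=I_{1}+I_{2}$, where $I_{1}=\int_{a}^{c}f(t)\Delta t$ and $I_{2}=\int_{c}^{b}f(t)\Delta t$, and to verify directly that $I$ satisfies the defining property of the Riemann $\Delta $-integral on $[a,b]$. Fix $\varepsilon >0$. By integrability on $[a,c]$ and on $[c,b]$ there are $\delta _{1},\delta _{2}>0$ such that every Riemann sum over a partition in $\mathcal{P}_{\delta _{1}}(a,c)$ is within $\varepsilon /3$ of $I_{1}$, and every Riemann sum over a partition in $\mathcal{P}_{\delta _{2}}(c,b)$ is within $\varepsilon /3$ of $I_{2}$. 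I would put $\delta =\min \{\delta _{1},\delta _{2}\}$ and record the elementary monotonicity fact that a subinterval satisfying the $\mathcal{P}_{\delta }$-condition also satisfies the $\mathcal{P}_{\delta _{j}}$-condition whenever $\delta \leq \delta _{j}$ (a short subinterval stays short, and a scattered jump $\rho (t_{i})=t_{i-1}$ is independent of the mesh).

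Now take an arbitrary $P=\{t_{0},\dots ,t_{n}\}\in \mathcal{P}_{\delta }(a,b)$ together with tags $\xi _{i}\in \lbrack t_{i-1},t_{i})$, giving the Riemann sum $S$. The argument splits according to whether $c$ is a node of $P$. If $c=t_{j}$ for some $j$, then $P$ decomposes into $P_{1}=\{t_{0},\dots ,t_{j}\}$ and $P_{2}=\{t_{j},\dots ,t_{n}\}$; each subinterval of $P_{1}$ and of $P_{2}$ is a subinterval of $P$, so by the monotonicity remark $P_{1}\in \mathcal{P}_{\delta _{1}}(a,c)$ and $P_{2}\in \mathcal{P}_{\delta _{2}}(c,b)$, while $S=S_{1}+S_{2}$ for the corresponding subsums. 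The triangle inequality then gives $|S-I|\leq |S_{1}-I_{1}|+|S_{2}-I_{2}|<\tfrac{2\varepsilon }{3}<\varepsilon $.

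The substantive case is $c\notin P$, where $c$ lies in the interior of a single subinterval, say $t_{k-1}<c<t_{k}$. The key observation is that this forces $t_{k}-t_{k-1}\leq \delta $: since $c\in \mathbb{T}$ and $t_{k-1}<c<t_{k}$, we have $\rho (t_{k})\geq c>t_{k-1}$, so the scattered-jump alternative of the $\mathcal{P}_{\delta }$-condition is excluded and the short alternative must hold. I would then insert $c$ to form refined partitions $P_{1}^{\prime }$ of $[a,c]$ and $P_{2}^{\prime }$ of $[c,b]$, both of which lie in the respective $\mathcal{P}_{\delta _{j}}$ classes because the two new pieces $[t_{k-1},c)$ and $[c,t_{k})$ have length at most $t_{k}-t_{k-1}\leq \delta $. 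Choosing the tag on whichever new piece already contains $\xi _{k}$ to be $\xi _{k}$ itself, the subsums $S_{1}^{\prime },S_{2}^{\prime }$ differ from $S$ only through a single residual term of the form $[f(\xi _{k})-f(\eta )](t_{k}-c)$, where $\eta $ lies in the same length-$\leq \delta $ subinterval.

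The main obstacle is controlling this residual, and this is exactly where boundedness enters: a Riemann $\Delta $-integrable function is bounded, so with $M=\sup_{[a,b]}|f|$ the residual is at most $2M\delta $. Hence, after additionally shrinking $\delta $ so that $2M\delta <\varepsilon /3$, I obtain $|S-I|\leq |S-(S_{1}^{\prime }+S_{2}^{\prime })|+|S_{1}^{\prime }-I_{1}|+|S_{2}^{\prime }-I_{2}|<\varepsilon $. Since $P\in \mathcal{P}_{\delta }(a,b)$ and its tags were arbitrary, this verifies the definition and simultaneously shows $\int_{a}^{b}f(t)\Delta t=I_{1}+I_{2}$. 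The only non-routine ingredient is the boundedness of integrable functions, which I would invoke from the basic theory in \cite{Gus3}; everything else is bookkeeping with the $\mathcal{P}_{\delta }$-condition.
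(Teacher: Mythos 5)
Your argument is correct, but note that the paper itself offers no proof of this statement: it is quoted as a known preliminary result from \cite{Gus3}, so there is nothing internal to compare against. Judged on its own, your direct verification via Riemann sums is sound, and you correctly isolate the two genuinely time-scale-specific points: (1) the monotonicity $\mathcal{P}_{\delta }\subseteq \mathcal{P}_{\delta _{j}}$ for $\delta \leq \delta _{j}$, and (2) the key observation that if $c\in \mathbb{T}$ lies strictly inside a partition subinterval $[t_{k-1},t_{k})$ then $\rho (t_{k})\geq c>t_{k-1}$ rules out the scattered-jump alternative and forces $t_{k}-t_{k-1}\leq \delta $, so inserting $c$ costs only an $O(M\delta )$ residual. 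Two trivial imprecisions: in the sub-case where $\xi _{k}\in \lbrack c,t_{k})$ the residual carries the factor $(c-t_{k-1})$ rather than $(t_{k}-c)$, which changes nothing since both are $\leq \delta $; and boundedness of an integrable function is only guaranteed on $[a,b)$ (the value at a left-scattered right endpoint never enters a Riemann sum), which again suffices because your tags $\xi _{k},\eta $ lie in $[a,b)$. The boundedness lemma you invoke is indeed available in \cite{Gus3}; the source actually derives additivity through Darboux sums and the Cauchy criterion, so your tagged-sum bookkeeping is a legitimate, slightly more hands-on alternative that avoids introducing upper and lower sums at the price of the explicit residual estimate.
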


\begin{theorem}
\cite{Gus3}If $\ f$ and $g$ are integrable on $\left[ a,b\right] $ and $%
f(t)\leq g(t)$ for all $[a,b),$ then $\tint\nolimits_{a}^{b}f(t)\Delta t\leq
\tint\nolimits_{a}^{b}g(t)\Delta t.$
\end{theorem}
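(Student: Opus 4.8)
The plan is to reduce the inequality to a statement about a single nonnegative integrand and then to track the sign through the definition of the Riemann $\Delta$-integral. First I would set $h:=g-f$. By parts (i) and (ii) of the linearity theorem stated above, $h$ is integrable on $[a,b]$ and $\int_a^b h(t)\Delta t=\int_a^b g(t)\Delta t-\int_a^b f(t)\Delta t$, so it suffices to prove the implication that $h(t)\geq 0$ on $[a,b)$ forces $\int_a^b h(t)\Delta t\geq 0$. The desired conclusion then follows by rearranging this single inequality.

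Next I would appeal directly to the definition of the integral via Riemann sums. Fix an arbitrary $\varepsilon>0$ and let $\delta>0$ be the number supplied by that definition applied to the integrable function $h$, so that $\left\vert S-I\right\vert<\varepsilon$ for every Riemann sum $S$ of $h$ corresponding to a partition $P\in\mathcal{P}_\delta$, where $I:=\int_a^b h(t)\Delta t$. By Lemma \ref{Pdel} the set $\mathcal{P}_\delta$ is nonempty, so such a partition $P=\left\{t_0,\dots,t_n\right\}$ exists; I would choose any sample points $\xi_i\in[t_{i-1},t_i)$ and form $S=\sum_{i=1}^n h(\xi_i)(t_i-t_{i-1})$.

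The crucial observation is that every sample point lies in $[a,b)$: since $\xi_i<t_i\leq t_n=b$, we have $\xi_i\in[a,b)$, which is exactly the set on which the hypothesis $h\geq 0$ is assumed. Hence $h(\xi_i)\geq 0$, and because each factor $t_i-t_{i-1}>0$, the whole sum satisfies $S\geq 0$. Combining this with $\left\vert S-I\right\vert<\varepsilon$ gives $I>S-\varepsilon\geq-\varepsilon$. As $\varepsilon>0$ was arbitrary, we conclude $I\geq 0$, which yields $\int_a^b g(t)\Delta t\geq\int_a^b f(t)\Delta t$.

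The argument is essentially routine; the only point demanding genuine care is the domain bookkeeping that places each $\xi_i$ in $[a,b)$ rather than in the closed interval. This is precisely why the hypothesis is stated on the half-open interval $[a,b)$ — the value $h(b)$ is never sampled by any Riemann sum of this type, so no assumption about it is needed. Beyond that subtlety, the proof relies only on the linearity theorem, the nonemptiness of $\mathcal{P}_\delta$ guaranteed by Lemma \ref{Pdel}, and the limit-of-Riemann-sums formulation of the integral.
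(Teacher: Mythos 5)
Your proof is correct. The paper states this monotonicity theorem without proof, merely citing \cite{Gus3}; your argument --- reducing to $h=g-f\geq 0$ via parts (i) and (ii) of the linearity theorem and then observing that every Riemann sum over a partition in $\mathcal{P}_{\delta}$ samples only points of $[a,b)$ and is therefore nonnegative, so the integral exceeds $-\varepsilon$ for every $\varepsilon>0$ --- is exactly the standard proof of this fact, and the details (nonemptiness of $\mathcal{P}_{\delta}$ via Lemma \ref{Pdel}, the half-open sampling intervals $[t_{i-1},t_{i})$) all check out.
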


\begin{theorem}
\cite{Gus3}If $f$ is integrable on $\left[ a,b\right] $ then so is $%
\left\vert f\right\vert $ and%
\begin{equation*}
\left\vert \int\nolimits_{a}^{b}f(t)\Delta t\right\vert \leq
\int\nolimits_{a}^{b}\left\vert f(t)\right\vert \Delta t.
\end{equation*}
\end{theorem}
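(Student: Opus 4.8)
The plan is to dispose of the inequality first and then concentrate on the integrability of $|f|$, which is the genuine content of the statement. Since $f$ is integrable, we have the pointwise bracketing $-|f(t)|\le f(t)\le |f(t)|$ on $[a,b)$; once $|f|$ is known to be integrable, applying the preceding monotonicity theorem to both inequalities (together with part (i) of the linearity theorem to rewrite $\int_a^b(-|f|)\Delta t=-\int_a^b|f(t)|\Delta t$) yields $-\int_a^b|f(t)|\Delta t\le \int_a^b f(t)\Delta t\le \int_a^b|f(t)|\Delta t$, which is exactly $\left|\int_a^b f(t)\Delta t\right|\le \int_a^b|f(t)|\Delta t$. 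So the whole difficulty is to establish that $|f|$ is Riemann $\Delta$-integrable on $[a,b]$.

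For that I would work through a Cauchy criterion for the integral: a function $g$ is integrable on $[a,b]$ if and only if for each $\varepsilon>0$ there is a $\delta>0$ so that any two Riemann sums of $g$ attached to partitions in $\mathcal{P}_\delta$ differ by less than $\varepsilon$. The necessity is immediate from the definition and the triangle inequality; the sufficiency produces the number $I$ as the limit of a Cauchy sequence of Riemann sums built from a sequence $\delta_n\downarrow 0$, using that $\mathcal{P}_{\delta'}\subseteq\mathcal{P}_\delta$ whenever $\delta'\le\delta$ and the completeness of $\mathbb{R}$.

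Granting this criterion for $f$, fix $\varepsilon>0$ and choose $\delta>0$ so that all Riemann sums of $f$ over $\mathcal{P}_\delta$ lie within $\varepsilon$ of $\int_a^b f(t)\Delta t$. Take any $P=\{t_0,\dots,t_n\}\in\mathcal{P}_\delta$ and any two tag choices $\{\xi_i\},\{\eta_i\}$ with $\xi_i,\eta_i\in[t_{i-1},t_i)$. By the reverse triangle inequality $\bigl||f(\xi_i)|-|f(\eta_i)|\bigr|\le |f(\xi_i)-f(\eta_i)|$, the difference of the two Riemann sums of $|f|$ is bounded by $\sum_i|f(\xi_i)-f(\eta_i)|(t_i-t_{i-1})$. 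The decisive trick is to let $\zeta_i$ be whichever of $\xi_i,\eta_i$ maximizes $f$ and $\zeta_i'$ the other; both are legitimate tags in $[t_{i-1},t_i)$, so $\sum_i|f(\xi_i)-f(\eta_i)|(t_i-t_{i-1})$ equals $S_f(\zeta)-S_f(\zeta')$, a difference of two honest Riemann sums of $f$ over the same $P\in\mathcal{P}_\delta$. Each of these lies within $\varepsilon$ of $\int_a^b f(t)\Delta t$, so the difference is at most $2\varepsilon$, and hence the two Riemann sums of $|f|$ differ by at most $2\varepsilon$. As $\varepsilon$ was arbitrary, $|f|$ meets the Cauchy criterion and is integrable, which closes the argument.

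I expect the main obstacle to be the Cauchy criterion itself, since it is not among the results quoted above: one must either extract it from \cite{Gus3} or prove its sufficiency direction by hand, and in doing so be careful that tags are drawn from the half-open intervals $[t_{i-1},t_i)$ and that the families $\mathcal{P}_\delta$ are nested in $\delta$. By contrast, the reverse-triangle estimate and the argmax/argmin replacement of tags are routine once this scaffolding is in place.
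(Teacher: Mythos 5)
This theorem is one of the background results the paper quotes verbatim from \cite{Gus3}; the paper itself supplies no proof, so there is nothing internal to compare against. Your argument is, in substance, the standard one (and essentially the one in \cite{Gus3}): reduce the inequality to the monotonicity and linearity theorems via $-|f|\le f\le |f|$, and reduce the integrability of $|f|$ to that of $f$ through the reverse triangle inequality together with a Riemann/Cauchy-type criterion. The core estimate is correct: for a fixed $P\in\mathcal{P}_{\delta}$ and two tag choices, $\sum_i\bigl||f(\xi_i)|-|f(\eta_i)|\bigr|(t_i-t_{i-1})\le\sum_i|f(\xi_i)-f(\eta_i)|(t_i-t_{i-1})=S_f(\zeta)-S_f(\zeta')<2\varepsilon$, and the argmax selection of $\zeta_i$ from the two-element set $\{\xi_i,\eta_i\}$ neatly avoids any issue about suprema being attained on the half-open intervals $[t_{i-1},t_i)$. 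One inaccuracy worth fixing: the criterion you \emph{state} (any two Riemann sums over possibly \emph{different} partitions in $\mathcal{P}_{\delta}$ are close) is stronger than what you \emph{verify} (two tag choices on the \emph{same} partition), so as written the argument does not close. What you actually need, and what your computation does establish, is the oscillation (Darboux/Riemann) form of the criterion: $U(g,P)-L(g,P)=\sum_i\omega_i(g)(t_i-t_{i-1})<\varepsilon$ for all $P\in\mathcal{P}_{\delta}$ implies integrability. That is precisely the Riemann criterion proved in \cite{Gus3}, so citing it in that form (rather than your two-partition Cauchy form) both repairs the mismatch and removes the need to prove the criterion by hand; one should also note that $\Delta$-integrability forces boundedness of $f$ so that the oscillations are finite. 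With that adjustment the proof is complete.
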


Now, we assume that $\mathbb{T}$ is unbounded above and $a\in \mathbb{T}$.
Let us suppose that%
\begin{equation*}
\left\{ t_{k}:k=0,1,...\right\} \subset \mathbb{T}\text{ \ \ \ where \ \ \ \ 
}a=t_{0}<t_{1}<...\text{ \ and }\underset{k\rightarrow \infty }{\lim }%
t_{k}=\infty
\end{equation*}%
and the function $f$ $:$ $\left[ a,\infty \right) =\left\{ t\in \mathbb{T}%
:t\geq a\right\} $ $\rightarrow \mathbb{R}$ is Riemann $\Delta $-integrable
from $a$ to any point $A\in \mathbb{T}$ with $A\geq a.$ If the integral%
\begin{equation*}
F(A)=\dint\nolimits_{a}^{A}f(t)\Delta t
\end{equation*}%
approaches a finite limit as $A\rightarrow \infty $, we call that limit the 
\textit{improper integral of first kind} of $f$ from $a$ to $\infty $ and we
write%
\begin{equation}
\dint\nolimits_{a}^{\infty }f(t)\Delta t=\underset{A\rightarrow \infty }{%
\lim }\dint\nolimits_{a}^{A}f(t)\Delta t  \label{imp}
\end{equation}%
In such a case we say that the improper integral (\ref{imp}) exists or that
it is convergent. (see \cite{Boh,Bo}).

\section{Some Function Spaces and Integral Transformations}

Throughout the study we assume that all time scales are unbounded above. Let 
$\mathbb{T}$ be a such time scale and $[\beta ,\infty )\subset \mathbb{T}$.
We denote the set of all real valued functions defined on $[\beta ,\infty )$
which are Riemann $\Delta $-integrable on every bounded subintervals of $%
[\beta ,\infty )$ by $\mathcal{R}_{\mathbb{T}}[\beta ,\infty )$. Function
spaces $C_{\mathbb{T}}[\beta ,\infty )$ and $C_{\mathbb{T}}^{0}[\beta
,\infty )$ be as follows.%
\begin{eqnarray*}
C_{\mathbb{T}}[\beta ,\infty ) &=&\left\{ f\in \mathcal{R}_{\mathbb{T}%
}[\beta ,\infty ):\lim\limits_{t\rightarrow \infty }f(t)\text{ exist}\right\}
\\
C_{\mathbb{T}}^{0}[\beta ,\infty ) &=&\left\{ f\in \mathcal{R}_{\mathbb{T}%
}[\beta ,\infty ):\lim\limits_{t\rightarrow \infty }f(t)=0\right\} .
\end{eqnarray*}%
It is evident that $C_{\mathbb{T}}[\beta ,\infty )$ and $C_{\mathbb{T}%
}^{0}[\beta ,\infty )$ \ are Banach spaces with respect to the norm%
\begin{equation*}
\left\Vert f\right\Vert =\sup\limits_{a\leq t<\infty }\left\vert
f(t)\right\vert .
\end{equation*}%
Note that, if $\mathbb{T=[}\beta ,\infty )=\mathbb{N}$ then these function
spaces become the space $c$ of convergent sequences and $c_{0}$ of null
sequences respectively.

We consider time scales $\mathbb{T}_{1}$ and $\mathbb{T}_{2}$. Let $[\alpha
,\infty )\subset \mathbb{T}_{1}$ and $[\beta ,\infty )\subset \mathbb{T}%
_{2}. $We assume that $f\in \mathcal{R}_{\mathbb{T}_{2}}[\beta ,\infty )$
and function $K:[\alpha ,\infty )\times \lbrack \beta ,\infty )\rightarrow 
\mathbb{R}$ is Riemann $\Delta $-integrable with respect to the variable $t$
on every bounded subinterval $[\beta ,\infty )$ for each $x\in \lbrack
\alpha ,\infty )$ i.e., $K(x,\circ )\in \mathcal{R}_{\mathbb{T}_{2}}[\beta
,\infty )$ for each $x\in \lbrack \alpha ,\infty ).$ If integral 
\begin{equation}
(Lf)(x)=\dint\nolimits_{\beta }^{\infty }K(x,t)f(t)\Delta t  \label{int don}
\end{equation}%
exists for all $x\in \lbrack \alpha ,\infty )$ then we transform $f$ \ to $%
Lf:[\alpha ,\infty )\rightarrow \mathbb{R}$. We use notation $(X,Y)$ for all
bounded-linear operators from $X$ to $Y.$

\begin{theorem}
\label{c0}Let $[\alpha ,\infty )\subset \mathbb{T}_{1}$, $[\beta ,\infty
)\subset \mathbb{T}_{2}$ and $K:[\alpha ,\infty )\times \lbrack \beta
,\infty )\rightarrow \mathbb{R}$ be a function such that $K(x,\circ )\in 
\mathcal{R}_{\mathbb{T}_{2}}[\beta ,\infty )$ for each $x\in \lbrack \alpha
,\infty ).$ Suppose the following conditions are satisfied:

i) $\underset{x\rightarrow x_{0}}{\lim }\dint\nolimits_{\beta }^{\infty
}\left\vert K(x,t)-K(x_{0},t)\right\vert \Delta t=0$ \ , \ $\forall x_{0}\in
\lbrack \alpha ,\infty )\newline
$

ii) $M=\sup\limits_{\alpha \leq x<\infty }\dint\nolimits_{\beta }^{\infty
}\left\vert K(x,t)\right\vert \Delta t<\infty $\newline

iii) $\underset{x\rightarrow \infty }{\lim }$ $\dint\nolimits_{\beta
}^{y}\left\vert K(x,t)\right\vert \Delta t=0$ $\ ,$ $\ \forall y\in \lbrack
\beta ,\infty ).$\newline
Then $L\in (C_{\mathbb{T}_{2}}^{0}[\beta ,\infty ),C_{\mathbb{T}%
_{1}}^{0}[\alpha ,\infty )).~$Moreover $\left\Vert L\right\Vert =M$.
\end{theorem}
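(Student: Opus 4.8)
The goal is to show that $L$ maps $C_{\mathbb{T}_2}^0[\beta,\infty)$ into $C_{\mathbb{T}_1}^0[\alpha,\infty)$, that $L$ is a bounded linear operator, and that $\|L\| = M$. Linearity is immediate from linearity of the integral, so the three substantive tasks are: (a) for $f\in C_{\mathbb{T}_2}^0$, the transform $Lf$ is well defined and continuous on $[\alpha,\infty)$; (b) $(Lf)(x)\to 0$ as $x\to\infty$, so that $Lf$ actually lands in $C_{\mathbb{T}_1}^0$; and (c) the operator norm equals $M$.

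First I would establish that $Lf$ exists and is bounded. For $f\in C_{\mathbb{T}_2}^0[\beta,\infty)$ the function $f$ is bounded, say $|f(t)|\le\|f\|$, so by the comparison and absolute-value theorems quoted above, $|(Lf)(x)|\le \int_\beta^\infty |K(x,t)|\,|f(t)|\,\Delta t\le \|f\|\int_\beta^\infty|K(x,t)|\,\Delta t\le M\|f\|$ for every $x$, using hypothesis (ii). This shows the defining integral converges and gives $\|Lf\|\le M\|f\|$, hence $\|L\|\le M$. Continuity of $Lf$ at an arbitrary $x_0$ follows from hypothesis (i): I would bound $|(Lf)(x)-(Lf)(x_0)|\le\|f\|\int_\beta^\infty|K(x,t)-K(x_0,t)|\,\Delta t$, which tends to $0$ as $x\to x_0$.

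The heart of the argument is step (b), showing $(Lf)(x)\to0$. This is the standard ``small tail plus vanishing head'' splitting. Given $\varepsilon>0$, since $f\in C_{\mathbb{T}_2}^0$ I choose $y\in[\beta,\infty)$ so large that $|f(t)|<\varepsilon$ for $t\ge y$; then I split $(Lf)(x)=\int_\beta^y K(x,t)f(t)\,\Delta t+\int_y^\infty K(x,t)f(t)\,\Delta t$. The tail integral is bounded by $\varepsilon\int_y^\infty|K(x,t)|\,\Delta t\le\varepsilon M$ uniformly in $x$ by (ii). The head integral is bounded by $\|f\|\int_\beta^y|K(x,t)|\,\Delta t$, which by hypothesis (iii) tends to $0$ as $x\to\infty$. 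Combining, $\limsup_{x\to\infty}|(Lf)(x)|\le\varepsilon M$, and since $\varepsilon$ is arbitrary the limit is $0$. I expect this uniform-tail-control step to be the main point where all three hypotheses must be carefully coordinated, since (ii) is what makes the tail estimate uniform in $x$ and lets the two limits be taken in the correct order.

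Finally, for the reverse inequality $\|L\|\ge M$ establishing $\|L\|=M$, I would argue that for each fixed $x$ the functional $f\mapsto\int_\beta^\infty K(x,t)f(t)\,\Delta t$ has norm equal to $\int_\beta^\infty|K(x,t)|\,\Delta t$ on $C_{\mathbb{T}_2}^0$; the natural candidate is to approximate the sign function of $K(x,\cdot)$ by admissible test functions in $C_{\mathbb{T}_2}^0$ of norm one, making the integral approach $\int_\beta^\infty|K(x,t)|\,\Delta t$ while keeping $\|f\|\le1$. Taking the supremum over $x$ and invoking (ii) then yields $\|L\|\ge M-\varepsilon$ for every $\varepsilon$, hence $\|L\|=M$. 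The delicate part here is that the test functions must be chosen inside $C_{\mathbb{T}_2}^0$ (so they must vanish at infinity and be $\Delta$-integrable), which is exactly where the time-scale structure and hypothesis (ii) guarantee the approximation can be truncated to a finite interval without losing more than $\varepsilon$.
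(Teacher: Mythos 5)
Your proposal is correct and follows essentially the same route as the paper: continuity from hypothesis (i), the head--tail splitting at a point $y$ with the tail controlled uniformly by (ii) and the head killed by (iii), and the lower bound $\left\Vert L\right\Vert \geq M$ obtained from a truncated sign-function test function of norm one. The only differences are cosmetic (you track $\varepsilon M$ via a limsup where the paper normalizes to $\varepsilon /2$ pieces), so no further comment is needed.
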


\begin{proof}
Let $f\in C_{\mathbb{T}_{2}}^{0}[\beta ,\infty )$ \ and we can assume that $%
\left\Vert f\right\Vert \neq 0$. By inequality%
\begin{equation*}
\left\vert (Lf)(x)-(Lf)(x_{0})\right\vert \leq \left\Vert f\right\Vert
\dint\nolimits_{\beta }^{\infty }\left\vert K(x,t)-K(x_{0},t)\right\vert
\Delta t
\end{equation*}%
and condition (i), function $Lf$ is countinous for all $x_{0}\in \lbrack
\alpha ,\infty )$. In order to show that $\lim\limits_{x\rightarrow \infty
}(Lf)(x)=0,$consider equality 
\begin{equation}
(Lf)(x)=\dint\nolimits_{\beta }^{y}K(x,t)f(t)\Delta
t+\dint\nolimits_{y}^{\infty }K(x,t)f(t)\Delta t.  \label{ana}
\end{equation}

Since $\lim\limits_{t\rightarrow \infty }f(t)=0$, we can choose real number $%
y$ such that 
\begin{equation*}
\left\vert f(t)\right\vert <\frac{\varepsilon }{2M}~\text{\ \ \ }
\end{equation*}%
for all \ $t\geq y$ and we have%
\begin{eqnarray}
\left\vert \dint\nolimits_{y}^{\infty }K(x,t)f(t)\Delta t\right\vert &\leq
&\dint\nolimits_{y}^{\infty }\left\vert K(x,t)f(t)\right\vert \Delta t 
\notag \\
&<&\frac{\varepsilon }{2M}~\dint\nolimits_{y}^{\infty }\left\vert
K(x,t)\right\vert \Delta t  \notag \\
&<&\frac{\varepsilon }{2}.  \label{ikinciterim}
\end{eqnarray}%
Because of (iii) there exists $x_{0}$ such that%
\begin{equation*}
\dint\nolimits_{\beta }^{y}\left\vert K(x,t)\right\vert \Delta t<\frac{%
\varepsilon }{2\left\Vert f\right\Vert }
\end{equation*}%
for all $x>x_{0}$. So we have%
\begin{eqnarray}
\left\vert \dint\nolimits_{\beta }^{y}K(x,t)f(t)\Delta t\right\vert &\leq
&\dint\nolimits_{\beta }^{y}\left\vert K(x,t)f(t)\right\vert \Delta t  \notag
\\
&<&\left\Vert f\right\Vert \dint\nolimits_{\beta }^{y}\left\vert
K(x,t)\right\vert \Delta t  \notag \\
&<&\frac{\varepsilon }{2}  \label{ilkterim}
\end{eqnarray}%
for all $x>x_{0}$. By (\ref{ana}),(\ref{ikinciterim}) and (\ref{ilkterim})we
obtain $\left\vert (Lf)(x)\right\vert <\varepsilon $ for all $x>x_{0}$.
Therefore $\ Lf\in C_{\mathbb{T}_{1}}^{0}[\alpha ,\infty ).$ Let us show
that $\left\Vert L\right\Vert =M$. Since $\left\Vert Lf\right\Vert \leq
M\left\Vert f\right\Vert $ for all $f\in C_{\mathbb{T}_{2}}^{0}[\beta
,\infty )$ we obtain $\left\Vert L\right\Vert \leq M$. Let arbitrary $%
\varepsilon >0$ be given. There exists $x_{0}\in \mathbb{[\alpha },\infty )$
such that,%
\begin{equation}
M-\frac{\varepsilon }{2}<\dint\nolimits_{\beta }^{\infty }\left\vert
K(x_{0},t)\right\vert \Delta t.  \label{supozl}
\end{equation}%
Because of $\int\nolimits_{\beta }^{\infty }\left\vert K(x_{0},t)\right\vert
\Delta t<\infty $ there exist $p\in (\beta ,\infty )$ such that,%
\begin{equation}
\dint\nolimits_{p}^{\infty }\left\vert K(x_{0},t)\right\vert \Delta t<\frac{%
\varepsilon }{2}.  \label{yak}
\end{equation}%
Consider function $f\in C_{\mathbb{T}_{2}}^{0}[\beta ,\infty )$ defined by%
\begin{equation*}
f(t)=\left\{ 
\begin{array}{cc}
0 & ,~~~~t>p \\ 
\text{sgn}K(x_{0},t) & ,~~~~t\leq p.%
\end{array}%
\right.
\end{equation*}%
It is clear that $\left\Vert f\right\Vert =1$. By (\ref{supozl}) and (\ref%
{yak}) we have%
\begin{equation*}
\left\Vert Lf\right\Vert =\sup\limits_{\alpha \leq x<\infty }\left\vert
(Lf)(x)\right\vert \geq \left\vert (Lf)(x_{0})\right\vert
=\dint\nolimits_{\beta }^{p}\left\vert K(x_{0},t)\right\vert \Delta
t>M-\varepsilon .
\end{equation*}%
So $\left\Vert L\right\Vert \geq M$. Therefore $\left\Vert L\right\Vert =M$.
\end{proof}

\begin{theorem}
\label{regular}Let $[\alpha ,\infty )\subset \mathbb{T}_{1}$, $[\beta
,\infty )\subset \mathbb{T}_{2}$ and $K:[\alpha ,\infty )\times \lbrack
\beta ,\infty )\rightarrow \mathbb{R}$ be a function such that $K(x,\circ
)\in \mathcal{R}_{\mathbb{T}_{2}}[\beta ,\infty )$ for each $x\in \lbrack
\alpha ,\infty ).$ Suppose the following conditions are satisfied:

i) $\underset{x\rightarrow x_{0}}{\lim }\dint\nolimits_{\beta }^{\infty
}\left\vert K(x,t)-K(x_{0},t)\right\vert \Delta t=0$ \ , \ $\forall x_{0}\in
\lbrack \alpha ,\infty )$

ii) $\sup\limits_{\alpha \leq x<\infty }\dint\nolimits_{\beta }^{\infty
}\left\vert K(x,t)\right\vert \Delta t<\infty $

iii) $\underset{x\rightarrow \infty }{\lim }\dint\nolimits_{\beta
}^{y}\left\vert K(x,t)\right\vert \Delta t=0$ \ $,$ $\ \forall y\in \lbrack
\beta ,\infty )$\newline

iv) $\underset{x\rightarrow \infty }{\lim }\dint\nolimits_{\beta }^{\infty
}K(x,t)\Delta t=1.$

Then $L\in (C_{\mathbb{T}_{2}}[\beta ,\infty ),C_{\mathbb{T}_{1}}[\alpha
,\infty )).$ Moreover if $f(t)\rightarrow s$ as $t\rightarrow \infty $ then $%
(Lf)(x)\rightarrow s$ as $x\rightarrow \infty .$
\end{theorem}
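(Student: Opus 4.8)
The plan is to reduce the statement to the already-proven Theorem \ref{c0} by peeling off the limit of $f$.

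First I would check that $L$ is a well-defined bounded linear operator on $C_{\mathbb{T}_{2}}[\beta ,\infty )$. Any $f\in C_{\mathbb{T}_{2}}[\beta ,\infty )$ is bounded, being Riemann $\Delta $-integrable (hence bounded) on each bounded subinterval and convergent at infinity; writing $M=\sup_{\alpha \leq x<\infty }\int_{\beta }^{\infty }\left\vert K(x,t)\right\vert \Delta t$, the estimate $\left\vert K(x,t)f(t)\right\vert \leq \left\Vert f\right\Vert \left\vert K(x,t)\right\vert $ together with condition (ii) yields, by comparison, absolute convergence of the improper integral defining $(Lf)(x)$ for every $x$, and also the bound $\left\Vert Lf\right\Vert \leq M\left\Vert f\right\Vert $. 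Linearity of $L$ is immediate from linearity of the Riemann $\Delta $-integral, and continuity of $Lf$ at each $x_{0}$ follows verbatim from the proof of Theorem \ref{c0}, using the inequality $\left\vert (Lf)(x)-(Lf)(x_{0})\right\vert \leq \left\Vert f\right\Vert \int_{\beta }^{\infty }\left\vert K(x,t)-K(x_{0},t)\right\vert \Delta t$ and condition (i). In particular $Lf\in \mathcal{R}_{\mathbb{T}_{1}}[\alpha ,\infty )$.

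The heart of the argument is the limit assertion, and the key device is the decomposition $f=(f-s)+s$. Suppose $f(t)\rightarrow s$ as $t\rightarrow \infty $, and let $g=f-s$, where $s$ denotes the constant function with that value. Then $g\in C_{\mathbb{T}_{2}}^{0}[\beta ,\infty )$ since $g(t)\rightarrow 0$, while the constant $s$ is trivially in $C_{\mathbb{T}_{2}}[\beta ,\infty )$. By linearity,
\begin{equation*}
(Lf)(x)=(Lg)(x)+s\int_{\beta }^{\infty }K(x,t)\Delta t.
\end{equation*}
Now Theorem \ref{c0}, whose hypotheses (i)--(iii) coincide with ours, applies to $g$ and gives $Lg\in C_{\mathbb{T}_{1}}^{0}[\alpha ,\infty )$, so $(Lg)(x)\rightarrow 0$ as $x\rightarrow \infty $; condition (iv) gives $\int_{\beta }^{\infty }K(x,t)\Delta t\rightarrow 1$. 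Hence $(Lf)(x)\rightarrow s$. This simultaneously shows that $\lim_{x\rightarrow \infty }(Lf)(x)$ exists --- so that, together with the continuity already established, $Lf\in C_{\mathbb{T}_{1}}[\alpha ,\infty )$ and $L\in (C_{\mathbb{T}_{2}}[\beta ,\infty ),C_{\mathbb{T}_{1}}[\alpha ,\infty ))$ --- and that $L$ preserves the limit $s$, which is the regularity assertion.

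I do not anticipate a genuinely hard step; the theorem is essentially the ``convergent'' counterpart of Theorem \ref{c0}, and subtracting the limit is exactly the right reduction. The only points requiring a little care are confirming that $g=f-s$ genuinely lies in $C_{\mathbb{T}_{2}}^{0}[\beta ,\infty )$ so that Theorem \ref{c0} applies verbatim, and noting that $\int_{\beta }^{\infty }K(x,t)\Delta t$ itself converges --- again by comparison with condition (ii) --- so that condition (iv) and the splitting are meaningful.
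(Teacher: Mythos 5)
Your proposal is correct and follows the same route as the paper: the paper's proof also writes $(Lf)(x)=\int_{\beta }^{\infty }K(x,t)\left( f(t)-s\right) \Delta t+s\int_{\beta }^{\infty }K(x,t)\Delta t$, applies Theorem \ref{c0} to $f-s\in C_{\mathbb{T}_{2}}^{0}[\beta ,\infty )$, and invokes condition (iv) for the constant term. You merely spell out the well-definedness, boundedness, and continuity details that the paper leaves implicit.
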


\begin{proof}
For $s=0$, it is evident by Theorem \ref{c0}. If $s\neq 0$ \ repeat by
Theorem \ref{c0}, we have%
\begin{equation*}
(Lf)(x)=\dint\nolimits_{\beta }^{\infty }K(x,t)\left( f(t)-s\right) \Delta
t+s\dint\nolimits_{\beta }^{\infty }K(x,t)\Delta t.
\end{equation*}%
So it is clear that if $f(t)\rightarrow s$ as $t\rightarrow \infty $ then $%
(Lf)(x)\rightarrow s$ as $x\rightarrow \infty .$
\end{proof}

\begin{theorem}
\label{teofonk}Let interval $[\beta ,\infty )$ occures isolated points of
time scale $\mathbb{T}$. If $F\in C_{\mathbb{T}}^{\ast }[\beta ,\infty )$
where is dual space of $C_{\mathbb{T}}[\beta ,\infty ).$ Then there exists
real number $b$ and $\ $sequence $(b_{n})\in l_{1}$ such that 
\begin{equation}
F(f)=b\underset{t\rightarrow \infty }{\lim }f(t)+\sum\limits_{n=1}^{\infty
}b_{n}f(t_{n})  \label{fonk}
\end{equation}%
for all $f\in C_{\mathbb{T}}[\beta ,\infty )$. Moreover, norm of the
functional $F$ is%
\begin{equation}
\left\Vert F\right\Vert =\left\vert b\right\vert +\sum\limits_{n=1}^{\infty
}\left\vert b_{n}\right\vert .  \label{fonknorm}
\end{equation}%
On the contrary, if real number $b$ and $\ $sequence $(b_{n})\in l_{1}$
given, left side of equality (\ref{fonk}) is a member of\ $C_{\mathbb{T}%
}^{\ast }[\beta ,\infty )$. Moreover $C_{\mathbb{T}}^{\ast }[\beta ,\infty )$
and $l_{1}$ are isomorphic spaces.
\end{theorem}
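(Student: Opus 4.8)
The plan is to reduce the statement to the classical duality $c^{\ast}\cong \ell_{1}$ and to carry that computation through in the present notation. Since $[\beta ,\infty )$ consists only of isolated points of $\mathbb{T}$, every bounded subinterval contains finitely many of them, so every real function on $[\beta ,\infty )$ is automatically Riemann $\Delta$-integrable there; enumerating the points as $\beta =t_{1}<t_{2}<\cdots$ with $t_{n}\to \infty$, the map $f\mapsto (f(t_{n}))_{n\ge 1}$ identifies $C_{\mathbb{T}}[\beta ,\infty )$ isometrically with the space $c$ of convergent sequences under the supremum norm, and $C_{\mathbb{T}}^{0}[\beta ,\infty )$ with $c_{0}$. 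I will write $e_{n}$ for the function equal to $1$ at $t_{n}$ and $0$ elsewhere (so $e_{n}\in C_{\mathbb{T}}^{0}$), and $u$ for the constant function $u\equiv 1$ (so $u\in C_{\mathbb{T}}$ with $\lim u=1$).

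For the forward direction, given $F\in C_{\mathbb{T}}^{\ast }[\beta ,\infty )$ I set $b_{n}:=F(e_{n})$. To see that $(b_{n})\in \ell_{1}$, I evaluate $F$ on the unit-norm functions $g_{N}:=\sum_{n=1}^{N}\operatorname{sgn}(b_{n})\,e_{n}$, which gives $\sum_{n=1}^{N}\lvert b_{n}\rvert =F(g_{N})\le \lVert F\rVert$ for every $N$, hence $\sum_{n=1}^{\infty }\lvert b_{n}\rvert \le \lVert F\rVert <\infty$. Then $G(f):=\sum_{n=1}^{\infty }b_{n}f(t_{n})$ converges absolutely and defines a bounded functional, and $F-G$ annihilates each $e_{n}$. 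Since finitely supported functions are dense in $C_{\mathbb{T}}^{0}$ and $F-G$ is continuous, $F-G$ vanishes on all of $C_{\mathbb{T}}^{0}[\beta ,\infty )$. Writing any $f\in C_{\mathbb{T}}[\beta ,\infty )$ as $f=(\lim f)\,u+\bigl(f-(\lim f)\,u\bigr)$, with the second summand in $C_{\mathbb{T}}^{0}$, I obtain $F(f)=b\lim_{t\to \infty }f(t)+\sum_{n=1}^{\infty }b_{n}f(t_{n})$, where $b:=(F-G)(u)=F(u)-\sum_{n=1}^{\infty }b_{n}$; this is (\ref{fonk}), and it shows that $b$ and $(b_{n})$ are uniquely determined by $F$.

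For the norm identity (\ref{fonknorm}), the bound $\lVert F\rVert \le \lvert b\rvert +\sum \lvert b_{n}\rvert$ is immediate from (\ref{fonk}) together with $\lvert \lim f\rvert \le \lVert f\rVert$. For the reverse inequality I test on $f_{N}$ defined by $f_{N}(t_{n})=\operatorname{sgn}(b_{n})$ for $n\le N$ and $f_{N}(t_{n})=\operatorname{sgn}(b)$ for $n>N$; then $\lVert f_{N}\rVert =1$, $\lim f_{N}=\operatorname{sgn}(b)$, and $F(f_{N})=\lvert b\rvert +\sum_{n=1}^{N}\lvert b_{n}\rvert +\operatorname{sgn}(b)\sum_{n>N}b_{n}$, whose limit as $N\to \infty$ is $\lvert b\rvert +\sum \lvert b_{n}\rvert$ (the tail of the absolutely convergent series vanishes), forcing $\lVert F\rVert \ge \lvert b\rvert +\sum \lvert b_{n}\rvert$. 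With the convention $\operatorname{sgn}0=0$ this single family of test functions also covers the degenerate cases where $b$ or some $b_{n}$ vanish.

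Finally, for the converse and the isomorphism: given $b\in \mathbb{R}$ and $(b_{n})\in \ell_{1}$, the right-hand side of (\ref{fonk}) converges absolutely since $\sum \lvert b_{n}f(t_{n})\rvert \le \lVert f\rVert \sum \lvert b_{n}\rvert$, and it defines a linear functional bounded by $\lvert b\rvert +\sum \lvert b_{n}\rvert$, hence a member of $C_{\mathbb{T}}^{\ast }[\beta ,\infty )$. The correspondence $F\leftrightarrow (b,b_{1},b_{2},\dots )$ is therefore a linear bijection onto $\ell_{1}$, and by (\ref{fonknorm}) it preserves norms once $\ell_{1}$ is given the norm $\lvert b\rvert +\sum \lvert b_{n}\rvert$; thus $C_{\mathbb{T}}^{\ast }[\beta ,\infty )$ and $\ell_{1}$ are isometrically isomorphic. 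I expect the main obstacle to be the two places where sharp test functions are needed — establishing the summability $(b_{n})\in \ell_{1}$ and the lower norm bound — together with the density argument that isolates the limit part of $F$; the remaining steps are routine.
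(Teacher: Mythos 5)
Your proof is correct and follows essentially the same route as the paper: the same coefficients $b_{n}=F(e_{n})$ and $b=F(e)-\sum_{n}F(e_{n})$, the same sign-pattern test functions for both the summability of $(b_{n})$ and the lower norm bound, and the same split of $f$ into its limit part plus a $C_{\mathbb{T}}^{0}$ part. The only cosmetic difference is that you justify the series representation by density of finitely supported functions in $C_{\mathbb{T}}^{0}[\beta ,\infty )$, where the paper simply invokes the Schauder basis $\{e,e_{1},e_{2},\dots\}$ — these are the same fact.
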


\begin{proof}
The contrary side is straightforward. Since members of the set $[\beta
,\infty )$ are isolated points, we can denote the set $[\beta ,\infty )$ by $%
[\beta ,\infty )=\left\{ t_{1},t_{2},...\right\} $ where $\beta
=t_{1}<t_{2}<...\ $and $t_{k}\rightarrow \infty $ as $k\rightarrow \infty .$
Let $F\in C_{\mathbb{T}}^{\ast }[\beta ,\infty )$. The set $\left\{
e,e_{1},e_{2},...\right\} $ is a Schauder basis for $C_{\mathbb{T}}[\beta
,\infty )$ where $e\equiv 1$ and $e_{i}(t_{j})=\delta _{ij}$ ($\delta _{ij}$
is Kronecker delta). For any member $f$ of $C_{\mathbb{T}}[\beta ,\infty )$
is expressed by%
\begin{equation*}
f=le+\sum\limits_{n=1}^{\infty }\left( f(t_{n})-l\right) e_{n}
\end{equation*}%
where $l=\underset{t\rightarrow \infty }{\lim }f(t)$. By linearity and
continuity of $F$ we have 
\begin{equation}
F(f)=lF(e)+\sum\limits_{n=1}^{\infty }\left( f(t_{n})-l\right) F(e_{n})
\label{yrdest}
\end{equation}%
for all $f\in C_{\mathbb{T}}[\beta ,\infty ).$ Consider function $f\in $ $C_{%
\mathbb{T}}[\beta ,\infty )$ defined by 
\begin{equation*}
f(t_{n})=\left\{ 
\begin{array}{cc}
\text{sgn}F(e_{n}) & 1\leq n\leq r \\ 
0 & n>r%
\end{array}%
\right. 
\end{equation*}%
for all $r\geq 1.$ Since $\left\Vert f\right\Vert =1$ and $\left\vert
F(f)\right\vert \leq \left\Vert F\right\Vert \left\Vert f\right\Vert $, we
have%
\begin{equation}
\left\vert F(f)\right\vert =\sum\limits_{n=1}^{r}\left\vert
F(e_{n})\right\vert \leq \left\Vert F\right\Vert   \label{we}
\end{equation}%
for all $r\geq 1.$ By (\ref{we}) we obtain%
\begin{equation*}
\sum\limits_{n=1}^{\infty }\left\vert F(e_{n})\right\vert \leq \left\Vert
F\right\Vert <\infty .
\end{equation*}%
It means that $\sum\nolimits_{n=1}^{\infty }F(e_{n})$ is absolute
convergent. Let $b=F(e)-\sum\nolimits_{n=1}^{\infty }F(e_{n})$ and $%
b_{n}=F(e_{n})$. By equality (\ref{yrdest}) we get 
\begin{equation}
F(f)=bl+\sum\limits_{n=1}^{\infty }b_{n}f(t_{n}).  \label{sd}
\end{equation}%
Since $\left\vert l\right\vert \leq \left\Vert f\right\Vert $ and by (\ref%
{sd}) we have%
\begin{equation*}
\left\vert F(f)\right\vert \leq \left( \left\vert b\right\vert
+\sum\limits_{n=1}^{\infty }\left\vert b_{n}\right\vert \right) \left\Vert
f\right\Vert .
\end{equation*}%
Therefore $\left\Vert F\right\Vert \leq \left\vert b\right\vert
+\sum\limits_{n=1}^{\infty }\left\vert b_{n}\right\vert $. Now consider to
function $\ f\in $ $C_{\mathbb{T}}[a,\infty )$ defined by%
\begin{equation*}
f(t_{n})=\left\{ 
\begin{array}{cc}
\text{sgn}b_{n} & 1\leq n\leq r \\ 
\text{sgn}b & n>r.%
\end{array}%
\right. 
\end{equation*}%
It is obvious that $\left\Vert f\right\Vert =1$ and $f(t)\rightarrow $sgn$b$
as $t\rightarrow \infty $. By $r\rightarrow \infty $ in%
\begin{equation*}
\left\vert F(f)\right\vert =\left\vert \left\vert b\right\vert
+\sum\limits_{n=1}^{r}\left\vert b_{n}\right\vert
+\sum\limits_{n=r+1}^{\infty }b_{n}\text{sgn}b\right\vert \leq \left\Vert
F\right\Vert ,
\end{equation*}%
we obtain%
\begin{equation*}
\left\vert b\right\vert +\sum\limits_{n=1}^{\infty }\left\vert
b_{n}\right\vert \leq \left\Vert F\right\Vert .
\end{equation*}%
For isomorphism $C_{\mathbb{T}}^{\ast }[\beta ,\infty )$ to $l_{1}$ consider
the operator $T:C_{\mathbb{T}}^{\ast }[\beta ,\infty )\rightarrow l_{1}$
defined by $T(F)=(b,b_{1},b_{2},...)$, it is evident that $\left\Vert
T(F)\right\Vert =\left\vert b\right\vert +\left\vert b_{1}\right\vert
+\left\vert b_{2}\right\vert +...=\left\Vert F\right\Vert $ so $T$ preserves
norm.
\end{proof}

\begin{theorem}
\label{CT0}Let interval $[\beta ,\infty )$ occures isolated points of time
scale $\mathbb{T}_{2}$ and $[\alpha ,\infty )$ be a subinterval of $\mathbb{T%
}_{1}.$ If $L\in (C_{\mathbb{T}_{2}}^{0}[\beta ,\infty ),C_{\mathbb{T}%
_{1}}^{0}[\alpha ,\infty ))$ then there exists the function $K:[\alpha
,\infty )\times \lbrack \beta ,\infty )\rightarrow \mathbb{R}$ such that $%
K(x,\circ )\in \mathcal{R}_{\mathbb{T}_{2}}[\beta ,\infty )$ for each $x\in
\lbrack \alpha ,\infty )$ which is satisfied equality (\ref{int don}).
Moreover, has the following properties:

i) $\left\Vert L\right\Vert =\sup\limits_{\alpha \leq x<\infty
}\dint\nolimits_{\beta }^{\infty }\left\vert K(x,t)\right\vert \Delta
t<\infty $\newline

ii) $\underset{x\rightarrow \infty }{\lim }\dint\nolimits_{\beta
}^{y}\left\vert K(x,t)\right\vert \Delta t=0$ \ $,$ $\ \forall y\in \lbrack
\beta ,\infty ).$\newline
\end{theorem}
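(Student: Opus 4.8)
The goal is to prove a converse to Theorem \ref{c0}: every bounded linear operator $L \in (C^0_{\mathbb{T}_2}[\beta,\infty), C^0_{\mathbb{T}_1}[\alpha,\infty))$ is represented by an integral kernel $K$, and this kernel inherits properties (i) and (ii). The key structural fact I would exploit is that $[\beta,\infty)$ consists of isolated points, so writing $[\beta,\infty) = \{t_1, t_2, \dots\}$ with $\beta = t_1 < t_2 < \cdots$, the $\Delta$-integral collapses to a weighted sum: by the theorem on integration from $t$ to $\sigma(t)$, we have $\int_\beta^\infty K(x,t)f(t)\,\Delta t = \sum_{n=1}^\infty \mu(t_n) K(x,t_n) f(t_n)$.

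Let me sketch the construction of $K$. For each fixed $x \in [\alpha,\infty)$, evaluation at $x$ composed with $L$, namely $f \mapsto (Lf)(x)$, is a bounded linear functional on $C^0_{\mathbb{T}_2}[\beta,\infty)$. I would first establish (or invoke in analogy to Theorem \ref{teofonk}) that the dual of $C^0_{\mathbb{T}_2}[\beta,\infty)$ is $l_1$: using the Schauder basis $\{e_1, e_2, \dots\}$ with $e_i(t_j) = \delta_{ij}$, each $f \in C^0$ expands as $f = \sum_n f(t_n) e_n$, and any bounded functional $\Phi$ satisfies $\Phi(f) = \sum_n c_n f(t_n)$ with $(c_n) \in l_1$ and $\|\Phi\| = \sum_n |c_n|$. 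Applying this to $\Phi_x := (L\,\cdot\,)(x)$ produces coefficients $c_n(x)$, and I would then define $K(x,t_n) := c_n(x)/\mu(t_n)$, so that $(Lf)(x) = \sum_n c_n(x) f(t_n) = \sum_n \mu(t_n) K(x,t_n) f(t_n) = \int_\beta^\infty K(x,t)f(t)\,\Delta t$, giving (\ref{int don}).

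**Verifying the kernel properties.** For property (i), the dual-norm identity gives $\|\Phi_x\| = \sum_n |c_n(x)| = \sum_n \mu(t_n)|K(x,t_n)| = \int_\beta^\infty |K(x,t)|\,\Delta t$. Since $\|\Phi_x\| = \sup_{\|f\|=1}|(Lf)(x)| \le \|L\|$, taking the supremum over $x$ yields $\sup_x \int_\beta^\infty |K(x,t)|\,\Delta t \le \|L\| < \infty$; the reverse inequality $\|L\| \le \sup_x \int_\beta^\infty|K(x,t)|\,\Delta t$ follows because $\|Lf\| = \sup_x|(Lf)(x)| \le \sup_x \|\Phi_x\|\,\|f\|$, establishing equality. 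For property (ii), fix $y = t_m \in [\beta,\infty)$; then $\int_\beta^{t_m}|K(x,t)|\,\Delta t = \sum_{n=1}^{m-1}\mu(t_n)|K(x,t_n)|$ is a finite sum, so it suffices to show each $K(x,t_n) \to 0$ as $x \to \infty$. This is where the hypothesis $\operatorname{Ran}(L) \subseteq C^0_{\mathbb{T}_1}[\alpha,\infty)$ enters decisively: applying $L$ to the basis element $e_n$ gives $Le_n \in C^0_{\mathbb{T}_1}$, so $(Le_n)(x) = c_n(x) = \mu(t_n)K(x,t_n) \to 0$ as $x \to \infty$. Summing finitely many such limits gives (ii).

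**The main obstacle.** The routine algebra is unproblematic once the dual representation is in hand. The genuine subtlety I would watch is the \emph{measurability/integrability} of $K(x,\circ)$, i.e.\ verifying $K(x,\circ) \in \mathcal{R}_{\mathbb{T}_2}[\beta,\infty)$: on an isolated-point time scale this is automatic for the finite truncations (every function on isolated points is $\Delta$-integrable on bounded subintervals, as in the $\mathbb{Z}$ case), and the improper integral converges because $\sum_n \mu(t_n)|K(x,t_n)| = \|\Phi_x\| < \infty$, so absolute convergence of the defining series gives both integrability and existence of (\ref{int don}). The only point demanding real care is justifying the interchange $\Phi_x(f) = \sum_n c_n(x)f(t_n)$ for \emph{all} $f \in C^0$ rather than finitely supported $f$; this rests on continuity of $\Phi_x$ together with convergence of the Schauder expansion $\sum_n f(t_n)e_n \to f$ in supremum norm, which holds precisely because $f(t_n) \to 0$. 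I would present this continuity-plus-basis argument explicitly, as it is the load-bearing step, and treat the kernel-property verifications as direct consequences of the $l_1$ duality.
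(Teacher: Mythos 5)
Your proposal is correct and follows essentially the same route as the paper: expand $f$ in the Schauder basis $\{e_n\}$, set $K(x,t_n)=(Le_n)(x)/\mu(t_n)$ so the $\Delta$-integral collapses to the weighted sum, deduce (ii) from $Le_n\in C^{0}_{\mathbb{T}_1}[\alpha,\infty)$, and get (i) from the $l_1$-norm formula for the functionals $f\mapsto(Lf)(x)$. The only (harmless) divergence is that you bound $\sup_x\Vert\Phi_x\Vert$ directly by $\Vert L\Vert$ and prove the reverse inequality by hand, whereas the paper invokes the uniform boundedness principle and then appeals to Theorem \ref{c0}; your version is, if anything, a little cleaner.
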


\begin{proof}
Let $[\beta ,\infty )=\left\{ t_{1},t_{2},...\right\} $ where $\beta
=t_{1}<t_{2}<...\ $and $t_{k}\rightarrow \infty $ as $k\rightarrow \infty $.
The set $\left\{ e,e_{1},e_{2},...\right\} $ is a Schauder basis for $C_{%
\mathbb{T}_{2}}^{0}[\beta ,\infty )$. For any member $f$ of $C_{\mathbb{T}%
_{2}}^{0}[\beta ,\infty )$ is expressed by 
\begin{equation*}
f=\dsum\limits_{k=1}^{\infty }f(t_{k})e_{k}.
\end{equation*}%
Let $b_{k}(x)=(Le_{k})(x)$ and the function $K:[\alpha ,\infty )\times
\lbrack \beta ,\infty )\rightarrow \mathbb{R}$ defined by $%
K(x,t_{k})(t_{k+1}-t_{k})=b_{k}(x)$. Since $L\in (C_{\mathbb{T}%
_{2}}^{0}[\beta ,\infty ),C_{\mathbb{T}_{1}}^{0}[\alpha ,\infty ))$ we have,

\begin{eqnarray}
(Lf)(x) &=&L\left( \dsum\limits_{k=1}^{\infty }f(t_{k})e_{k}\right) (x) 
\notag \\
&=&\dsum\limits_{k=1}^{\infty }f(t_{k})(Le_{k})(x)  \notag \\
&=&\dsum\limits_{k=1}^{\infty }f(t_{k})b_{k}(x)  \label{nrm} \\
&=&\dsum\limits_{k=1}^{\infty }K(x,t_{k})f(t_{k})(t_{k+1}-t_{k})  \notag \\
&=&\dint\nolimits_{\beta }^{\infty }K(x,t)f(t)\Delta t.  \label{intgst}
\end{eqnarray}%
By the hypothesis, we know that $Lf\in C_{\mathbb{T}_{1}}^{0}[\alpha ,\infty
)$ for all $f\in C_{\mathbb{T}_{2}}^{0}[\beta ,\infty )$. Hence, it is clear
that $Le_{k}\in C_{\mathbb{T}_{1}}^{0}[\alpha ,\infty )$. This means that
the function $K(x,t_{k})\rightarrow 0\ $as $x\rightarrow \infty $ for all $%
t_{k}$. Thus we obtain (ii). Let us show that $\left\Vert L\right\Vert
=\sup\limits_{\alpha \leq x<\infty }\int\nolimits_{\beta }^{\infty
}\left\vert K(x,t)\right\vert \Delta t$. By Theorem \ref{c0}, it is only
need to show this supremum exists. Functionals $L_{x}:C_{\mathbb{T}%
_{2}}^{0}[\beta ,\infty )\rightarrow \mathbb{R}$ defined by $L_{x}(f)=(Lf)(x)
$ are linear for each $x\in \lbrack \alpha ,\infty )$. Since%
\begin{equation*}
\left\vert L_{x}(f)\right\vert =\left\vert (Lf)(x)\right\vert \leq
\left\Vert Lf\right\Vert \leq \left\Vert L\right\Vert \left\Vert
f\right\Vert 
\end{equation*}%
$L_{x}$ are bounded for each $x\in \lbrack \alpha ,\infty )$. By the uniform
boundedness principle, we have%
\begin{equation*}
\underset{\alpha \leq x<\infty }{\sup }\left\Vert L_{x}\right\Vert <\infty .
\end{equation*}%
Hence, by the Theorem \ref{teofonk} norm of functionals $L_{x}$ which have
form of (\ref{nrm}) is 
\begin{equation*}
\left\Vert L_{x}\right\Vert =\dsum\limits_{k=1}^{\infty }\left\vert
b_{k}(x)\right\vert .
\end{equation*}%
Therefore we obtain,%
\begin{eqnarray*}
\dsum\limits_{k=1}^{\infty }\left\vert b_{k}(x)\right\vert 
&=&\dsum\limits_{k=1}^{\infty }\left\vert K(x,t_{k})\right\vert
(t_{k+1}-t_{k}) \\
&=&\dint\nolimits_{\beta }^{\infty }\left\vert K(x,t)\right\vert \Delta t.
\end{eqnarray*}
\end{proof}

\begin{theorem}
Let interval $[\beta ,\infty )$ occures isolated points of time scale $%
\mathbb{T}_{2}$ and $[\alpha ,\infty )$ be a subinterval of $\mathbb{T}_{1}.$
If $L\in (C_{\mathbb{T}_{2}}[\beta ,\infty ),C_{\mathbb{T}_{1}}[\alpha
,\infty ))$ and $(Lf)(x)\rightarrow s$ as $x\rightarrow \infty $ whenever $%
f(t)\rightarrow s$ as $t\rightarrow \infty $ for all $f\in C_{\mathbb{T}%
_{2}}[\beta ,\infty )$ then there exists the function $K:[\alpha ,\infty
)\times \lbrack \beta ,\infty )\rightarrow \mathbb{R}$ such that $K(x,\circ
)\in \mathcal{R}_{\mathbb{T}_{2}}[\beta ,\infty )$ for each $x\in \lbrack
\alpha ,\infty )$ which is satisfied equality (\ref{int don}). Moreover, $K$
has the following properties:

i) $\left\Vert L\right\Vert =\sup\limits_{\alpha \leq x<\infty
}\dint\nolimits_{\beta }^{\infty }\left\vert K(x,t)\right\vert \Delta
t<\infty $\newline

ii) $\underset{x\rightarrow \infty }{\lim }\dint\nolimits_{\beta
}^{y}\left\vert K(x,t)\right\vert \Delta t=0$ \ $,$ $\ \forall y\in \lbrack
\beta ,\infty )$\newline

iii) $\underset{x\rightarrow \infty }{\lim }\dint\nolimits_{\beta }^{\infty
}K(x,t)\Delta t=1.$
\end{theorem}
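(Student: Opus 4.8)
The plan is to mirror the proof of Theorem \ref{CT0}, but now using the Schauder basis of the convergent space $C_{\mathbb{T}_2}[\beta,\infty)$ from Theorem \ref{teofonk} and extracting the extra conclusion (iii) from the limit-preserving hypothesis. Since $[\beta,\infty)$ consists of isolated points I would write $[\beta,\infty)=\{t_1,t_2,\dots\}$ with $\beta=t_1<t_2<\cdots$ and $t_k\to\infty$, so that $\mu(t_k)=t_{k+1}-t_k$. Setting $b_k(x)=(Le_k)(x)$ and defining $K$ by $K(x,t_k)(t_{k+1}-t_k)=b_k(x)$, the $\Delta$-integral over these isolated points collapses to a series,
\begin{equation*}
\int\nolimits_{\beta}^{\infty}K(x,t)f(t)\Delta t=\sum_{k=1}^{\infty}K(x,t_k)f(t_k)(t_{k+1}-t_k)=\sum_{k=1}^{\infty}b_k(x)f(t_k),
\end{equation*}
exactly as in (\ref{intgst}). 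The first useful observation is that the limit-preserving hypothesis forces $L$ to restrict to an operator in $(C_{\mathbb{T}_2}^{0}[\beta,\infty),C_{\mathbb{T}_1}^{0}[\alpha,\infty))$: if $\lim_{t\to\infty}f(t)=0$ then $s=0$ and so $\lim_{x\to\infty}(Lf)(x)=0$. Hence Theorem \ref{CT0} applies to $L$ on the null space and already delivers the representation $(Lf)(x)=\int_{\beta}^{\infty}K(x,t)f(t)\Delta t$ for every $f\in C^{0}_{\mathbb{T}_2}[\beta,\infty)$.

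I would first dispatch the parts that transfer directly from the null space. Property (ii) is immediate: each $e_k$ lies in $C^{0}_{\mathbb{T}_2}$, so $Le_k\in C^0$ and $b_k(x)\to0$; since $\int_{\beta}^{y}|K(x,t)|\Delta t=\sum_{t_k<y}|b_k(x)|$ is a \emph{finite} sum, it tends to $0$. The finiteness asserted in (i) is the uniform boundedness argument of Theorem \ref{CT0} run verbatim: the functionals $L_x(f)=(Lf)(x)$ are bounded with $\sup_x\|L_x\|\le\|L\|<\infty$, and by Theorem \ref{teofonk} (restricted to the null space, where the limit term drops out) their $C^0$-norm equals $\sum_{k}|b_k(x)|=\int_{\beta}^{\infty}|K(x,t)|\Delta t$, so this supremum is finite. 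Thus the $C^0$-representation, property (ii), and finiteness in (i) come essentially for free.

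The hard part will be everything that distinguishes the convergent space from the null space, namely the pointwise identity (\ref{int don}) for $f$ with nonzero limit, the norm identity in (i), and condition (iii); these all hinge on a single quantity. By Theorem \ref{teofonk} the functional $L_x$ acts as
\begin{equation*}
L_x(f)=b(x)\lim_{t\to\infty}f(t)+\sum_{k=1}^{\infty}b_k(x)f(t_k),\qquad b(x)=(Le)(x)-\sum_{k=1}^{\infty}b_k(x),
\end{equation*}
whereas the $\Delta$-integral reproduces only the series $\sum_k b_k(x)f(t_k)$. Consequently (\ref{int don}) holds for all $f$ exactly when the limit-functional coefficient $b(x)$ vanishes for each fixed $x$; the full $C$-norm is $\sup_x\bigl(|b(x)|+\sum_k|b_k(x)|\bigr)$, which equals $\sup_x\int|K|$ only if $b\equiv0$; and since $(Le)(x)\to1$ by regularity while $\sum_k b_k(x)=(Le)(x)-b(x)$, condition (iii) is equivalent to $b(x)\to0$. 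The delicate point is that $b(x)\to0$ does \emph{not} follow from the termwise limits $b_k(x)\to0$, since the series $\sum_k b_k(x)$ need not converge to its termwise limit uniformly in $x$. I therefore expect the decisive step to be the control of $b(x)$: showing $b\equiv0$ so that $L$ coincides with the genuine $\Delta$-integral transform determined by its action on $\{e_k\}$, and then deducing $(Le)(x)=\int_{\beta}^{\infty}K(x,t)\Delta t\to1$. Reconciling the pointwise integral representation with this limit-functional part is the main obstacle, and I would treat it as the heart of the proof rather than a routine extension of Theorem \ref{CT0}.
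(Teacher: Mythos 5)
Your analysis of where the difficulty lies is sharper than the paper's own treatment, but the proposal stops exactly at the point where a proof is needed. You correctly reduce everything to the claim $b(x)=0$: the representation (\ref{int don}) on all of $C_{\mathbb{T}_2}[\beta ,\infty )$, the norm identity in (i), and condition (iii) each require that the coefficient of the limit functional in $L_x(f)=b(x)\lim_{t\rightarrow \infty }f(t)+\sum_{k}b_k(x)f(t_k)$ vanish. But you never prove this; you only announce that you ``expect the decisive step to be the control of $b(x)$.'' That step cannot in fact be carried out from the stated hypotheses: the operator $(Lf)(x)=\lim_{t\rightarrow \infty }f(t)$ (constant in $x$) is bounded from $C_{\mathbb{T}_2}[\beta ,\infty )$ to $C_{\mathbb{T}_1}[\alpha ,\infty )$ and preserves limits, yet here $b(x)\equiv 1$ and $b_k(x)\equiv 0$ for all $k$, so the only candidate kernel is $K\equiv 0$ and (\ref{int don}) fails for $f\equiv 1$. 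The missing step is therefore not merely hard; it is unprovable without an additional hypothesis restricting $L$ to operators determined by their values on $\{e_k\}$.

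For comparison, the paper's proof is a two-line reduction: (i) and (ii) are cited from Theorem \ref{CT0}, and (iii) is obtained by ``taking $f\equiv 1$ in (\ref{intgst})'' --- which silently assumes that the identity (\ref{intgst}), established only through the Schauder expansion valid for $f\in C^{0}_{\mathbb{T}_2}[\beta ,\infty )$, extends to the constant function $e\equiv 1\notin C^{0}_{\mathbb{T}_2}[\beta ,\infty )$. That is precisely the unjustified extension you isolated. So you have correctly diagnosed the weak point of the paper's argument, and your treatment of (ii) and of the finiteness in (i) via Theorems \ref{CT0} and \ref{teofonk} is sound; but as written your proposal does not close the gap either, and in its present form neither argument proves the theorem.
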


\begin{proof}
(i) and (ii) are obvious by Theorem \ref{CT0}. For (iii) we can take
constant function $f\equiv 1$ in (\ref{intgst}).
\end{proof}

\end{document}